\long\def\pythonbegin#1\pythonend{}
\numberwithin{equation}{section}\overfullrule 5pt
\newtheorem{thm}{Theorem}
\newtheorem{cor}[thm]{Corollary}
\newtheorem{lem}[thm]{Lemma}
\newtheorem{prop}[thm]{Proposition}
\theoremstyle{definition}
\title[Tangent and Genocchi numbers]{
Combinatorial proofs of some properties of tangent and Genocchi numbers
}
\date{February 26, 2018}
\author{Guo-Niu HAN}
\author{Jing-Yi Liu$^*$}
\address{I.R.M.A., UMR 7501, Universit\'e de Strasbourg
et CNRS, 7 rue Ren\'e Descartes, 67084 Strasbourg, France}
\email{guoniu.han@unistra.fr}
\address{Department of Mathematics, Beijing Normal University,
Beijing 100875, China}
\email{jingyi.math@gmail.com}
\thanks{*Jing-Yi Liu is the corresponding author.}
\subjclass[2010]{05A15, 05A19, 05C05, 05A10, 11A07}
\keywords{tangent numbers, Genocchi numbers, divisibility, hook length formula
$k$-ary tree}
\begin{document}

\begin{abstract} 
The tangent number $T_{2n+1}$ is equal to the number of
	increasing labelled complete binary trees with $2n+1$ vertices.
This combinatorial interpretation immediately proves that $T_{2n+1}$ is divisible by $2^n$.
However, a stronger divisibility property is known in the studies
of Bernoulli and Genocchi numbers, namely, the divisibility of
$(n+1)T_{2n+1}$ by $2^{2n}$. The traditional proofs of this fact
need significant calculations. In the present paper,
we provide a combinatorial proof of the latter divisibility by
using the hook length formula for trees.
Furthermore, our method is extended to $k$-ary trees, leading to a
new generalization of the Genocchi numbers.
\end{abstract}
\maketitle

\bigskip

\section{Introduction} \label{sec:introduction} 
The {\it tangent numbers} 
\footnote{Some mathematical literature uses a slightly different notation where $\tan x$ is written $T_1 x + T_2 x^3/3! + T_3 x^5/5! + \cdots$ 
(See \cite{KnuthBuckholtz1967})}
$(T_{2n+1})_{n\geq 0}$
appear in the Taylor expansion of $\tan(x)$:
\begin{equation}
	\tan x = \sum_{n\geq 0} T_{2n+1} \frac {x^{2n+1}}{(2n+1)!}.
\end{equation}
It is known that the tangent number $T_{2n+1}$ is equal to the number of
all {\it alternating permutations} of length $2n+1$  
(see \cite{Andre1879, Euler1755, KnuthBuckholtz1967, Nielsen1923}).
Also, $T_{2n+1}$ counts the number of
{\it increasing labelled complete binary trees}  with $2n+1$ vertices.
This combinatorial interpretation immediately implies 
that $T_{2n+1}$ is divisible by $2^n$.
However, a stronger divisibility property is known related to the study of Bernoulli and Genocchi numbers \cite{Carlitz1960,Carlitz1971, RiordanStein1973}, as stated in the following theorem.
\begin{thm}\label{th:tan}
The number $(n+1)T_{2n+1}$ is divisible by $2^{2n}$, and the quotient
is an odd number.
\end{thm}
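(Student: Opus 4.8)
The plan is to convert the divisibility into a sharp $2$-adic valuation statement and then extract that statement from the hook length formula together with the root decomposition of complete binary trees.

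First I would rewrite $T_{2n+1}$ through the hook length formula. Summing over all shapes of complete binary trees with $2n+1$ vertices, the number of increasing labellings of a given shape is $(2n+1)!/\prod_v h_v$, so
\[
 T_{2n+1}=(2n+1)!\,a_n,\qquad a_n:=\sum_{T}\ \prod_{v\in T}\frac1{h_v}.
\]
The decisive structural remark is that in a complete binary tree every subtree is again complete and therefore has an odd number of vertices; hence every hook length $h_v$ is odd, each denominator $\prod_v h_v$ is odd, and $a_n$ is a rational number with odd denominator. Writing $v_2$ for the $2$-adic valuation and $s_2$ for the binary digit sum, Legendre's formula gives $v_2((2n+1)!)=(2n+1)-s_2(2n+1)$. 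Using $s_2(2n+1)=s_2(n)+1$ and the elementary identity $v_2(n+1)=s_2(n)-s_2(n+1)+1$, a one-line bookkeeping shows that the theorem, namely $v_2((n+1)T_{2n+1})=2n$, is \emph{exactly} equivalent to the clean claim
\[
 v_2(a_n)=s_2(n+1)-1 .
\]

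To prove this claim I would induct on $n$ via the recursion obtained by labelling the root and splitting the remaining labels between the two subtrees, $T_{2n+1}=\sum_{i+j=n-1}\binom{2n}{2i+1}T_{2i+1}T_{2j+1}$, which after dividing by $(2n+1)!$ becomes the reduced recursion $(2n+1)a_n=\sum_{i+j=n-1}a_i a_j$. Since $2n+1$ is odd, it suffices to evaluate $v_2$ of the right-hand side. Reindexing by $p=i+1,\ q=j+1$ with $p+q=n+1=:m$, the inductive hypothesis gives $v_2(a_ia_j)=s_2(p)+s_2(q)-2$, and by Kummer's theorem $s_2(p)+s_2(q)-s_2(m)$ equals the number of carries in the base-$2$ addition $p+q$, i.e. $v_2\binom{m}{p}$. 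Thus the terms of least valuation are those minimising the number of carries; when $m$ is not a power of two these are the carry-free splits, in which the $1$-bits of $m$ are distributed between $p$ and $q$. These come in pairs $(p,q)\leftrightarrow(q,p)$ (a self-paired split would need $p=q$, which always carries), so after factoring out the common power of $2$ their contribution is a sum of terms $2\,\beta_p\beta_q$ with every $\beta$ odd, which forces exactly one extra factor of $2$ and no more.

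The main obstacle is that the carry-free terms are not the only ones at the critical valuation: a split with exactly one carry contributes at the same valuation $s_2(m)-1$, so a priori it could combine with the leading part and push the valuation strictly higher; the power-of-two case and the diagonal split $p=q$ need separate bookkeeping as well. To control this I would strengthen the induction to carry the odd part $\beta_p=a_{p-1}/2^{s_2(p)-1}$ modulo $4$ (equivalently, the residues of the Genocchi numbers modulo $4$); a short computation shows these residues are not constant, so this finer information really is needed. With $\beta_p\bmod 4$ in hand, the leading part is pinned to $2\pmod 4$ after rescaling while the one-carry terms are shown to contribute a multiple of $4$, so the two do not interact and the valuation is exactly $s_2(m)-1$, closing the induction. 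As a sanity check and an alternative viewpoint, the easy bound $2^{n}\mid T_{2n+1}$ is visible combinatorially: swapping the two children at each of the $n$ internal nodes defines a free $(\mathbb{Z}/2)^n$-action on increasing complete binary trees, and the sharper statement is the assertion that, once the factor $n+1$ is incorporated, the refined orbit count remains odd.
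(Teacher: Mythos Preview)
Your reduction to the statement $v_2(a_n)=s_2(n+1)-1$ is correct, and the inductive scheme via $(2n+1)a_n=\sum_{p+q=n+1}a_{p-1}a_{q-1}$ together with Kummer's carry count does close. But you have overestimated the obstacle: the mod~$4$ strengthening is not needed. In the non-power-of-two case $s_2(m)\ge 2$, the symmetry $(p,q)\leftrightarrow(q,p)$ already does all the work. A self-paired one-carry split would force $p=q$ with exactly one carry in $p+p$, i.e.\ $s_2(p)=1$, hence $m=2p$ a power of two, which is excluded; so the one-carry terms pair off and their rescaled contribution is automatically $\equiv 0\pmod 4$ with no information on the $\beta_p$ beyond oddness. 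The carry-free terms likewise pair off into $2^{s_2(m)-1}-1$ unordered pairs, an odd count of odd products, giving rescaled contribution $\equiv 2\pmod 4$. In the power-of-two case $m=2^k$ the unique minimum-carry split is the diagonal $(2^{k-1},2^{k-1})$, contributing a single odd square, and every other split has at least two carries; so the sum is odd. Thus the induction closes without tracking residues mod~$4$.

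This is a genuinely different route from the paper's. The paper never inducts and never touches Kummer or digit sums. For divisibility it passes from $s$ to an augmented tree $s^+$ with $2n+2$ leaves and $2n+1$ internal vertices, observes that leaf-labellings of $s^+$ carry a free $(\mathbb Z/2)^{2n+1}$-action by pivoting, and reads off $2^{2n+1}\mid (2n+2)!\,\#\bar s=(2n+2)H(\bar s)T(\bar s)$; since $H(\bar s)$ is odd this gives $2^{2n}\mid (n+1)T(\bar s)$ class by class. For oddness it replaces $G_{2n+2}$ by the weighted sum $f(n)=\sum_{\bar s}H(\bar s)G(\bar s)$, which it evaluates in closed form as $\frac{(2n+2)!}{2^{2n+1}}C_n=(2n-1)!!\,(2n+1)!!$ via the Catalan count of shapes, visibly odd. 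What the paper's approach buys is a one-line global computation (no recursion, no case analysis on $s_2(m)$) and a combinatorial interpretation that generalises verbatim to $k$-ary trees. What your approach buys is a self-contained $2$-adic argument that needs no auxiliary tree construction and makes the exact valuation $v_2(a_n)=s_2(n+1)-1$ explicit, which is a sharper intermediate statement than anything the paper records.
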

The quotient is called {\it  Genocchi number} and denoted by 
\begin{equation}\label{eq:genocchi}
G_{2n+2}:=(n+1)T_{2n+1}/2^{2n}.
\end{equation}
Let
$$g(x):=\displaystyle\sum_{n\ge 0}G_{2n+2}\frac{x^{2n+2}}{(2n+2)!}$$
be the exponential generating function for the Genocchi numbers. Then, \eqref{eq:genocchi} is
equivalent to
\begin{equation}\label{eq:gx}
g(x)=x\tan{\frac x2}.
\end{equation}

The initial values of the tangent and Genocchi numbers are listed below:
$$
\begin{tabular}{c | c c c c c c c }
	$n$ & 0 & 1 & 2 & 3 & 4 & 5 & 6 \\
	\hline
	$T_{2n+1}$ & 1 & 2 & 16 & 272 & 7936 & 353792 & 22368256\\
	$G_{2n+2}$ & 1 & 1 & 3 & 17 & 155 & 2073 & 38227\\
	\hline
\end{tabular}
$$

\medskip

The fact that the Genocchi numbers are odd integers
is traditionally proved by using the von Staudt-Clausen theorem on Bernoulli numbers and the little Fermat theorem 
\cite{Carlitz1960,Carlitz1971, RiordanStein1973}.
Barsky \cite{Barsky1980, FoataHan2008} gave a different proof by using the Laplace transform.
To the best of the authors' knowledge, no simple combinatorial proof has been derived yet and it is the purpose of this paper to provide one.
Our approach is based on the geometry of the
so-called
{\it leaf-labelled tree}
and the fact that the hook length $h_v$ of such a tree is always an odd integer
(see Sections \ref{sec:BT} and~\ref{sec:tan}).

In Section \ref{sec:kary}
we consider the $k$-ary trees
instead of the binary trees
and
obtain a new generalization of the Genocchi numbers.
For each integer $k\geq 2$, let $L_{kn+1}^{(k)}$ be the number of increasing labelled complete $k$-ary trees with $kn+1$ vertices. 
Thus, $L^{(k)}_{kn+1}$ will appear to be a natural generalization 
of the tangent number.
The general result is stated next.
\begin{thm}\label{th:kary}
	(a) For each integer $k\geq 2$, the integer
	$$\frac{(k^2 n-kn+k)!\,L^{(k)}_{kn+1}}{ (kn+1)!}$$
is divisible by $(k!)^{kn+1}$.

	(b) Moreover, the quotient
\begin{align*}
	M^{(k)}_{k^2 n-kn+k}:=
	\frac{(k^2 n-kn+k)!\, L^{(k)}_{kn+1}}{(k!)^{kn+1}(kn+1)!}\equiv
\begin{cases}
	1\pmod {k}, &k=p, \\
	1\pmod {p^2}, &k=p^t,\ t\ge 2, \\
	0\pmod {k}, &\text{otherwise},
\end{cases}
\end{align*}
where $n\ge 1$ and $p$ is a prime number.
\end{thm}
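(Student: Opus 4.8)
The starting point is the hook length formula for increasing trees. Writing $N=kn+1$ for the number of vertices and letting $h_v$ denote the size of the subtree rooted at $v$ (its \emph{hook length}), one has
\[
\frac{L^{(k)}_{kn+1}}{(kn+1)!}=\sum_{T}\prod_{v\in T}\frac1{h_v},
\]
the sum ranging over all complete $k$-ary plane trees with $N$ vertices. The decisive structural fact, generalising the parity observation behind Sections~\ref{sec:BT}--\ref{sec:tan}, is that the subtree rooted at any vertex is again a complete $k$-ary tree (or a single leaf), so that every hook length satisfies $h_v\equiv1\pmod k$. At the outset I would record the bookkeeping identities $\ell=(k-1)n+1$, $k\ell=(k-1)N+1=k^2n-kn+k$, and $(k!)^{N}=(k!)^{\ell}(k!)^{n}$ (using $N=\ell+n$), which exactly match the exponents in the statement.

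For part~(a) the plan is to realise $M^{(k)}_{k\ell}$ as the cardinality of a set of \emph{leaf-labelled trees}, so that integrality is automatic. Concretely, I would first pass from plane trees to trees with unordered children, which absorbs the factor $(k!)^{n}$ because an unordered increasing tree has exactly $(k!)^{n}$ plane refinements (the labels being distinct, the $k$ subtrees at each of the $n$ internal nodes can be independently ordered). I would then distribute the label set $\{1,\dots,k\ell\}$ over the $\ell$ leaves in blocks of size $k$, the factor $(k!)^{\ell}$ absorbing the internal order of each block. The combined effect is precisely the substitution encoded by the generating function $x\,y(x^{k-1}/k!)$, where $y'=1+y^{k}$; for $k=2$ this reduces to $x\tan(x/2)=g(x)$ of \eqref{eq:gx}. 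The remaining task is to prove the weighted hook-length sum $\frac{(k\ell)!}{(k!)^{\ell}}\cdot\frac{1}{N!}\sum_T\prod_v h_v^{-1}$ equals an honest count of leaf-labelled trees; once this bijective identity is established, $M^{(k)}_{k\ell}\in\mathbb Z_{\ge0}$ follows at once.

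For part~(b) I would study this set of leaf-labelled trees modulo $k$ through a cyclic action of $\mathbb Z/k\mathbb Z$, for instance by cyclically rotating the $k$ labels stored at a canonically chosen leaf, or by cyclically permuting the $k$ subtrees at the root. Orbit sizes then divide $k$, so for $k=p$ prime the orbit-counting principle gives $M^{(k)}_{k\ell}\equiv(\#\text{fixed points})\pmod{p}$. Here the congruence $h_v\equiv1\pmod k$ is what forces the fixed-point count to collapse: the multinomial weights at the internal nodes reduce, by a Lucas-type argument, to a single surviving fully symmetric configuration, giving $\#\text{fixed points}\equiv1$ and hence $M^{(k)}_{k\ell}\equiv1\pmod p$. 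For $k=p^{t}$ with $t\ge2$ I would refine the same action so that every nontrivial orbit has size divisible by $p^{2}$, upgrading the congruence to $\bmod\ p^{2}$. When $k$ is divisible by two distinct primes, I would instead argue prime by prime that each prime divisor $p\mid k$ divides $M^{(k)}_{k\ell}$ (the corresponding $\mathbb Z/p$-action having no admissible fixed tree once $k$ carries a second prime factor), and then combine these via the Chinese remainder theorem to obtain $M^{(k)}_{k\ell}\equiv0\pmod k$.

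The main obstacle, I expect, is the explicit construction of the leaf-labelled model together with the bijective proof of the hook-length identity underpinning part~(a): everything in part~(b) rests on having a concrete set on which the cyclic group acts and on controlling its fixed points. Granting the model, the delicate point within part~(b) is the $\bmod\ p^{2}$ refinement for prime powers $k=p^{t}$, where one must verify that the extra factors of $p$ carried by $k!$ suffice to annihilate every orbit contribution beyond the single symmetric tree; this is precisely where the value of $v_p(k!)$ and the congruence $h_v\equiv1\pmod k$ have to be balanced against one another.
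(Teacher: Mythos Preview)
Your leaf-labelled model for part~(a) is essentially the paper's own construction: attaching $k$ new leaves below each of the $\ell=(k-1)n+1$ old leaves and labelling the resulting $k\ell$ leaves with $\{1,\dots,k\ell\}$ produces a tree $r^{+}$ with exactly $kn+1=N$ internal nodes, and the free action of $\prod_{v}S_k\cong S_k^{\,N}$ by reordering the children at each internal node supplies the factor $(k!)^{N}$. Your two-step absorption $(k!)^{n}\cdot(k!)^{\ell}$ is just a regrouping of the same count, so the integrality argument is sound and matches the paper.

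The genuine gap is in part~(b). Both cyclic actions you propose are \emph{free}: since the $k\ell$ labels are pairwise distinct, a nontrivial cyclic rotation of the $k$ labels at any fixed leaf, or of the $k$ labelled subtrees at the root, can never fix a leaf-labelled tree. Orbit counting would then yield $M^{(k)}_{k\ell}\equiv 0\pmod p$ for every prime $p\mid k$, contradicting the case $k=p$. The paper does not use a group action here at all. It exploits the congruence $H(\bar r):=\prod_{v}h_v\equiv 1\pmod k$ differently: replacing $M_{k\ell}=\sum_{\bar r}M(\bar r)$ by the weighted sum $f(n)=\sum_{\bar r}H(\bar r)M(\bar r)$ changes nothing modulo $k$, but the factor $H(\bar r)$ cancels the hook-length denominator and collapses $f(n)$ to the closed form
\[
f(n)\;=\;\frac{(k\ell)!}{(k!)^{N}}\,C_k(n)\;=\;\prod_{i=0}^{kn-n}\binom{ik+k-1}{k-1}\prod_{j=0}^{n-1}\binom{jk+k-1}{k-1},
\]
with $C_k(n)$ the Fuss--Catalan number. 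The three cases of~(b) are then read off from this product: for $k=p$ each factor is $\equiv 1\pmod p$; for $k=p^{t}$ one reduces $\binom{ip^{t}+p^{t}-1}{p^{t}-1}\equiv\binom{ip+p-1}{p-1}\equiv 1\pmod{p^{2}}$ via an elementary lemma; and for $k$ with two distinct prime factors Legendre's formula applied to $f(1)=(k^{2})!/(k!)^{k+1}$ gives $p^{m}\mid f(1)$ whenever $p^{m}\,\|\,k$, whence $k\mid f(n)$ for all $n\ge 1$ since $f(n)\mid f(n+1)$. Your outline never reaches such a closed form, and without it the ``Lucas-type collapse to a single fixed tree'', the ``$\bmod\ p^{2}$ refinement'', and the ``no admissible fixed tree'' claim for composite $k$ remain unsupported.
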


We can
 realize that Theorem \ref{th:kary} is a direct generalization of Theorem \ref{th:tan},
if we restate the problem in terms of generating functions.
Let $\phi^{(k)}(x)$ and $\psi^{(k)}(x)$ denote the exponential generating functions for 
$L^{(k)}_{kn+1}$ and $M^{(k)}_{k^2n-kn+k}$, respectively, that is,
\begin{align*}
	\phi^{(k)}(x)&=\sum_{n\ge 0}L^{(k)}_{kn+1}\frac{x^{kn+1}}{(kn+1)!}; \\
	\psi^{(k)}(x)&=\sum_{n\ge 0}M^{(k)}_{k^2n-kn+k}\frac{x^{k^2n-kn+k}}{(k^2n-kn+k)!}.
\end{align*}
If $k$ is clear from the context, the superscript $(k)$ will be omitted.
Thus, we will write  $L_{kn+1}:=L^{(k)}_{kn+1},\,
	M_{k^2 n-kn+k} := M^{(k)}_{k^2 n-kn+k},\,
\phi(x):=\phi^{(k)}(x),\, 
\psi(x):=\phi^{(k)}(x)$.
From Theorem \ref{th:kary} we have
\begin{align*}
\phi'(x)=1+\phi^k(x);
\end{align*}
\begin{align*}
{\psi(x)}=x \cdot \phi\left(\displaystyle \frac{x^{k-1}}{k!}\right).
\end{align*}
The last relation becomes the well-known formula \eqref{eq:gx} when $k=2$.

Several generalizations of the Genocchi numbers
have been studied in recent decades.
They are based on
 the Gandhi polynomials \cite{Domaratzki2004,Carlitz1971, RiordanStein1973},
Seidel triangles \cite{DumontRand1994, ZengZhou2006},
continued fractions \cite{Viennot1982, HanZeng1999den}, combinatorial models \cite{HanZeng1999den}, etc.
Our generalization seems to be the first extension
dealing with
 the divisibility of
$(n+1)T_{2n+1}$ by $2^{2n}$. It also raises the following open problems.
\smallskip

{\bf Problem 1}. Find a proof of Theorem \ref{th:kary} \`a la Carlitz, or \`a la Barsky.
\smallskip

{\bf Problem 2}. Find the Gandhi polynomials, Seidel triangles, continued fractions and a combinatorial model for the new generalization of Genocchi numbers $M_{k^2n-kn+k}$ \`a la Dumont.
\smallskip

{\bf Problem 3}. Evaluate $m_n:=M_{k^2n-kn+k} \pmod k$ for $k=p^t$, where $p$ is a prime number and $t\geq 3$. It seems that the sequence $(m_n)_{n\geq 0}$ is always periodic for any $p$ and $t$. 
Computer calculation has provided the initial values:
\begin{align*}
	(m_n)_{n\geq 0} &= (1,1,5,5,1,1,5,5,\cdots) \qquad \text{for } k=2^3,\\
 	(m_n)_{n\geq 0} &= (1,1,10,1,1,10,1,1,10\cdots) \qquad \text{for } k=3^3,\\
    (m_n)_{n\geq 0} &= (1,1,126,376,126,1,1,126,376,126,\cdots) \qquad \text{for } k=5^4, \\
    (m_n)_{n\geq 0} &= (1,1,13,5,9,9,5,13,1,1,13,5,9,9,5,13,\cdots) \qquad \text{for } k=2^4. 
\end{align*}


\section{Increasing labelled binary trees}\label{sec:BT} 
In this section we recall some basic notions on 
increasing labelled binary trees.
Consider the set $\mathcal{T}(n)$ of all (unlabelled) binary trees with $n$ vertices.
For each $t\in \mathcal{T}(n)$
let $\mathcal{L}(t)$ denote the set of all
{\it increasing labelled binary trees} of shape $t$, 
obtained from $t$ by labeling its $n$ vertices with $\{1,2,\ldots,n\}$ in such a way
that the label of each vertex is less than that of its descendants.
For each vertex~$v$ of~$t$, the {\it hook length} of~$v$,
denoted by $h_v(t)$ or $h_v$, is
the number of descendants of~$v$ (including $v$).
The {\it hook length formula} (\cite[\S5.1.4. Ex. 20]{Knuth1998Vol3})
claims that
the number of increasing labelled binary trees of shape $t$
is equal to $n!$ divided by the product of the $h_v$'s ($v\in t$)
\begin{equation}\label{eq:hooklength}
	\#\mathcal{L}(t)=\frac{n!}{\prod_{v\in t} h_v}.
\end{equation}

Let $\mathcal{S}(2n+1)$ denote
the set of all
{\it complete binary trees} $s$ with $2n+1$ vertices, which are defined to be
the binary trees such that the two
subtrees of each vertex are, either both empty, or both non-empty.
For example, there are five complete binary trees with $2n+1=7$ vertices,
labelled by their hook lengths in Fig.~1.

\medskip

\pythonbegin
beginfig(1, "1.5mm");
setLegoUnit([3,3])
#showgrid([0,0], [20,14])  # show grid
r=0.15
rtext=r+0.5  # distance between text and the point

def ShowPoint(ptL, labelL, dir, fill=True):
	[circle(p[z-1], r, fill=fill) for z in ptL]
	[label(p[ptL[z]-1], labelL[z], dist=[rtext, rtext], dist_direction=dir) for z in range(len(ptL))]

dist=[1,1,1,1]
p=btree([6,4,7,2,5,1,3], [4,8], dist=dist, dot="fill", dotradius=r)
ShowPoint([6,7,5,3], [1,1,1,1], 270)
ShowPoint([4,2,1], [3,5,7],135)
label(addpt(p[0],[0,1.6]), "$s_1$")

p=btree([4,2,6,5,7,1,3], [8.2,8], dist=dist, dot="fill", dotradius=r)
ShowPoint([4,6,7], [1,1,1], 270)
ShowPoint([2,1], [5,7],135)
ShowPoint([5,3], [3,1],45)
label(addpt(p[0],[0,1.6]), "$s_2$")

p=btree([2,1,6,4,7,3,5], [12.4,8], dist=dist, dot="fill", dotradius=r)
ShowPoint([5,6,7], [1,1,1], 270)
ShowPoint([4,2,1], [3,1,7],135)
ShowPoint([3], [5],45)
label(addpt(p[0],[0,1.6]), "$s_3$")

p=btree([2,1,4,3,6,5,7], [16.6,8], dist=dist, dot="fill", dotradius=r)
ShowPoint([2,4,6,7], [1,1,1,1], 270)
ShowPoint([5,3,1], [3,5,7],45)
label(addpt(p[0],[0,1.6]), "$s_4$")

p=btree([4,2,5,1,6,3,7], [22.8,8], dist=[1.2, 0.8], dot="fill", dotradius=r)
ShowPoint([5,4,6,7], [1,1,1,1], 270)
ShowPoint([1,2], [7,3],135)
ShowPoint([3], [3],45)
label(addpt(p[0],[0,1.6]), "$s_5$")

endfig();
\pythonend
\begin{center}{\includegraphics[width=0.8\textwidth]{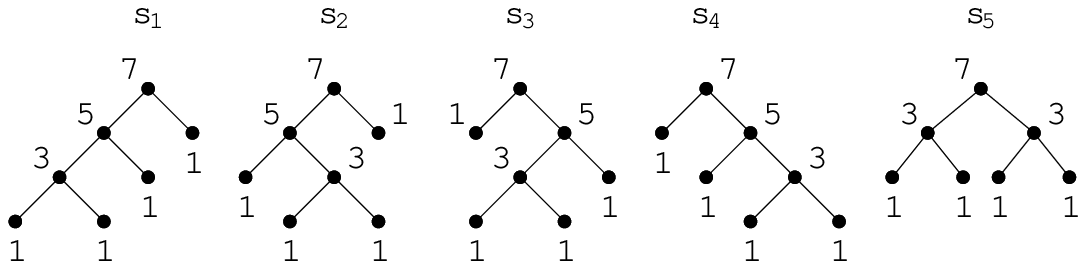}}\end{center}
\begin{center}{Fig.~1.~Complete binary trees with 7 vertices}\end{center}

\medskip
We now define an equivalence relation on $\mathcal{S}(2n+1)$, called {\it pivoting}.
A {\it basic pivoting}  is an exchange of
the two subtrees of a non-leaf vertex $v$.
For $s_1, s_2\in \mathcal{S}(2n+1)$, if $s_1$ can be changed to $s_2$ by a finite sequence of basic pivotings, we write $s_1\sim s_2$.
It's routine to check that $\sim$ is an equivalence relation. Let $\mathcal{\bar S}(2n+1) = \mathcal{S}(2n+1)/\!\!\sim$.
Since $s_1\sim s_2$ implies that $\#\mathcal{L}(s_1)=\#\mathcal{L}(s_2)$,
we define $\#\mathcal{L}(\bar s)=\#\mathcal{L}(s)$ for $s\in \bar s$.
Then
\begin{equation}\label{eq:normaltree}
T_{2n+1} = \sum_{\bar s\in\mathcal{\bar S}(2n+1)} T(\bar s),
\end{equation}
where
\begin{equation}\label{eq:Ts}
T(\bar s)=\sum_{s \in \bar s} \# \mathcal{L}(s) =
\#\bar s \times \#\mathcal{L}(\bar s).
\end{equation}
For example, consider $\mathcal{S}(7)$ (see Fig. 1), we have
\[\begin{array}{cccccc}
\text{shape} & s_1 & s_2 & s_3 & s_4 & s_5 \\
\prod_v h_v & 3\cdot 5\cdot 7 & 3\cdot 5\cdot 7 & 3\cdot 5\cdot 7 & 3\cdot 5\cdot 7 & 3\cdot 3\cdot 7 \\
n!/\prod_v h_v & 48 & 48 & 48 & 48 & 80 \end{array}\]
Trees $s_1, s_2, s_3$ and $s_4$ belong to the same equivalence class $\overline {s_1}$, while $s_5$ is in another equivalence class
$\overline {s_5}$.
Thus $T(\overline{s_1})=4\times 48=192$, $T(\overline{s_5})=80$ and
$T_7=T(\overline{s_1})+T(\overline{s_5})=272$.

\medskip

The pivoting can also be viewed as an  equivalence relation on
the set $\cup_{s\in \bar s} \mathcal{L}(s)$, that is,
all increasing labelled trees of shape $s$ with $s\in \bar s$.
Since the number of non-leaf vertices
is $n$ in $s$,
there are exactly $2^n$ labelled trees in each equivalence class.
Hence, $T(\bar s)$ is divisible by $2^n$.
Take again the example above, $T(\overline{s_1})/2^3=24$, $T(\overline{s_5})/2^3=10$,
and $T_7/2^3 = 24+10=34$.

\medskip

This is not enough to derive that $2^{2n}\mid (n+1)T_{2n+1}$. However, the above process
leads us to reconsider the question in each equivalence class.
We can show that the divisibility actually holds in
each $\bar s$, as stated below.

\medskip

\begin{prop}\label{th:divisibilitybar}
	For each $\bar s\in \mathcal{S}(2n+1)$, the integer
$(n+1)T(\bar s)$ is divisible by $2^{2n}$.
\end{prop}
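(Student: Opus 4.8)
The plan is to determine the exact power of $2$ dividing $T(\bar s)$, and then reduce the statement to an elementary inequality about binary expansions. Write $v_2(m)$ for the $2$-adic valuation of $m$ and $s_2(m)$ for the number of $1$'s in the binary expansion of $m$, so that Legendre's formula reads $v_2(m!)=m-s_2(m)$. First I would compute $v_2(\#\mathcal{L}(\bar s))$: every subtree of a complete binary tree is itself a complete binary tree, hence has an odd number of vertices, so every hook length $h_v$ is odd and $\prod_v h_v$ is odd. The hook length formula \eqref{eq:hooklength} then gives
$$v_2(\#\mathcal{L}(\bar s))=v_2((2n+1)!)=v_2((2n)!)=2n-s_2(n),$$
using $s_2(2n)=s_2(n)$.

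Next I would compute $v_2(\#\bar s)$. Let $q(s)$ be the number of internal vertices of $s$ whose two subtrees are isomorphic as unlabelled trees. A short induction on $s$ shows that the automorphism group has order $|\mathrm{Aut}(s)|=2^{q(s)}$: a symmetric internal vertex contributes a factor $2$ from the swap of its two subtrees, an asymmetric one contributes nothing. Since a rooted plane tree is rigid, $\mathrm{Aut}(s)$ acts freely on the $2^n$ assignments of a left/right order at the $n$ internal vertices, and the orbits are exactly the distinct members of $\bar s$; therefore $\#\bar s=2^n/|\mathrm{Aut}(s)|$ and $v_2(\#\bar s)=n-q(s)$.

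Combining these with \eqref{eq:Ts}, I get
$$v_2\bigl((n+1)T(\bar s)\bigr)=v_2(n+1)+\bigl(n-q(s)\bigr)+\bigl(2n-s_2(n)\bigr),$$
so the desired divisibility $2^{2n}\mid (n+1)T(\bar s)$ is equivalent to $q(s)\le n+v_2(n+1)-s_2(n)$. Here I would use the elementary identity $s_2(n)=s_2(n+1)+v_2(n+1)-1$ (adding $1$ to $n$ turns its block of trailing $1$'s into $0$'s and the next $0$ into a $1$); writing $\ell=n+1$ for the number of leaves of $s$, the target inequality collapses to the clean statement $q(s)\le \ell-s_2(\ell)$.

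The crux is therefore to prove $q(s)\le \ell-s_2(\ell)$ for every complete binary tree with $\ell$ leaves, which I would do by induction on $\ell$. If the root has subtrees $s_L,s_R$ with $\ell_L,\ell_R$ leaves, then $q(s)=q(s_L)+q(s_R)+[s_L\cong s_R]$, and the inductive hypothesis reduces the claim to $s_2(\ell_L)+s_2(\ell_R)-[s_L\cong s_R]\ge s_2(\ell)$. If $s_L\not\cong s_R$ this is just the subadditivity $s_2(\ell_L+\ell_R)\le s_2(\ell_L)+s_2(\ell_R)$ of the binary digit sum; if $s_L\cong s_R$ then $\ell_L=\ell_R=\ell/2$ and $s_2(\ell)=s_2(\ell/2)$, so it reduces to $s_2(\ell/2)\ge 1$, which holds. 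I expect the main obstacle to be the bookkeeping in the second step (establishing $\#\bar s=2^{n-q(s)}$) together with spotting the reformulation in terms of leaves; once the problem is recast as $q(s)\le \ell-s_2(\ell)$, the induction is routine.
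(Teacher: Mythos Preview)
Your proof is correct and takes a genuinely different route from the paper's. The paper never computes $\#\bar s$ or analyses $\mathrm{Aut}(s)$; instead it multiplies through by the odd integer $H(\bar s)=\prod_v h_v$ to obtain $(2n+2)H(\bar s)T(\bar s)=(2n+2)!\cdot\#\bar s$, and then interprets this product combinatorially as the number of leaf-labellings of an enlarged tree $s^{+}$ obtained by grafting a three-vertex tree onto each leaf of $s$. Since $s^{+}$ has $2n+1$ internal vertices, pivoting acts freely and partitions these leaf-labellings into blocks of size $2^{2n+1}$, yielding the divisibility in one stroke. Your argument instead computes the $2$-adic valuation exactly via $\#\bar s=2^{\,n-q(s)}$ and reduces everything to the structural inequality $q(s)\le \ell-s_2(\ell)$, proved by induction on the tree. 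The paper's approach is shorter, sidesteps the symmetry bookkeeping, and carries over verbatim to $k$-ary trees (which is the purpose of the later sections); your approach gives the sharper statement $v_2\bigl((n+1)T(\bar s)\bigr)=2n+\bigl(\ell-s_2(\ell)-q(s)\bigr)$, so it pinpoints exactly which equivalence classes make the bound $2^{2n}$ tight.
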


\medskip

Let $G(\bar s):= (n+1)T(\bar s)/2^{2n}$. Proposition \ref{th:divisibilitybar} implies that $G(\bar s)$ is an integer.
By \eqref{eq:genocchi} and \eqref{eq:normaltree},
\begin{equation}\label{eq:Genocchi}
G_{2n+2} = \sum_{\bar s\in\mathcal{\bar S}(2n+1)} G(\bar s).
\end{equation}
We give an example here and present the proof in the next section.

For $n=4$, there are three equivalence classes.

\medskip

\pythonbegin
beginfig(2, "1.6mm");
setLegoUnit([3,3])
#showgrid([0,0], [20,14])  # show grid
r=0.15
rtext=r+0.5  # distance between text and the point

def ShowPoint(ptL, labelL, dir, fill=True):
	[circle(p[z-1], r, fill=fill) for z in ptL]
	[label(p[ptL[z]-1], labelL[z], dist=[rtext, rtext], dist_direction=dir) for z in range(len(ptL))]

dist=[1,1,1,1]
p=btree([8,6,9,4,7,2,5,1,3], [4,8], dist=dist, dot="fill", dotradius=r)
ShowPoint([8,9,7,5,3], [1,1,1,1,1], 270)
ShowPoint([6,4,2,1], [3,5,7,9],135)
label(addpt(p[0],[0,1.6]), "$s_1\in\overline{s_1}$")

dist=[1.6,1.6,1,1]
p=btree([6,4,7,2,8,5,9,1,3], [11,8], dist=dist, dot="fill", dotradius=r)
ShowPoint([8,9,7,6,3], [1,1,1,1,1], 270)
ShowPoint([4,2,1], [3,7,9],135)
ShowPoint([5], [3],45)
label(addpt(p[0],[0,1.6]), "$s_2\in\overline{s_2}$")

dist=[1.6,1,1,1]
p=btree([6,4,7,2,5,1, 8,3,9], [18,8], dist=dist, dot="fill", dotradius=r)
ShowPoint([8,9,7,6,5], [1,1,1,1,1], 270)
ShowPoint([4,2,1], [3,5,9],135)
ShowPoint([3], [3],45)
label(addpt(p[0],[0,1.6]), "$s_3\in\overline{s_3}$")

endfig();
\pythonend
\begin{center}{\includegraphics[width=0.8\textwidth]{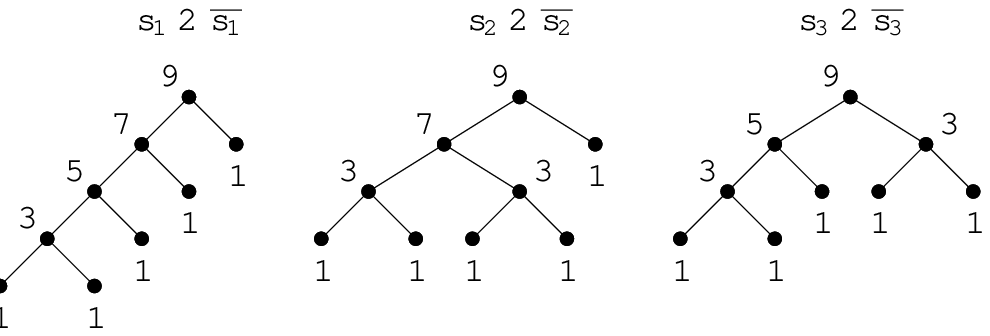}}\end{center}
\begin{center}{Fig.~2.~Three equivalence classes for $n=4$}\end{center}

	\goodbreak

	In this case, Proposition \ref{th:divisibilitybar} and relation \eqref{eq:Genocchi} can be verified by
	the following table.

\nobreak

\begin{center}\begin{tabular}{cccccc}
$\bar s$ & $\#\bar s$ & $\prod h_v$ & $\#\mathcal{L}(\bar s)$ & $T(\bar s)$ & $G(\bar s)$ \\
\hline
	$\overline{s_1}$ & 8 & $3\cdot 5\cdot 7\cdot 9$ & 384 & 3072 & 60 \\
	$\overline{s_2}$ & 2 & $3\cdot 3\cdot 7\cdot 9$ & 640 & 1280 & 25 \\
	$\overline{s_3}$ & 4 & $3\cdot 3\cdot 5\cdot 9$ & 896 & 3584 & 70 \\
\hline
sum & 14 &\quad & \quad & 7936 & 155
\end{tabular}\end{center}

\section{Combinatorial proof of Theorem \ref{th:tan}} \label{sec:tan} 
Let $n$ be a nonnegative integer and
$\bar s\in \mathcal{\bar S}(2n+1)$ be an equivalence class in the set of
increasing labelled complete binary trees.
The key of the proof is the fact that
the hook length $h_v$ is always an odd integer.
For each complete binary tree $s$,
we denote the product of all hook lengths by $H(s)=\prod_{v\in s} h_v$.
Also, let $H(\bar s)=H(s)$ for $s\in \bar s$, since all trees in the
equivalence class $\bar s$ share the same product of all hook lengths.

\begin{lem}\label{lem:hooklength}
For each complete binary tree $s$,
the product of all hook lengths
$H(s)$ is an odd integer.
\end{lem}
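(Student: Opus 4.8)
The plan is to prove the stronger statement already hinted at in the surrounding text, namely that \emph{every} hook length $h_v$ of a complete binary tree is odd; the lemma then follows at once, since a product of odd integers is odd. So the real task reduces to understanding a single hook length $h_v$ rather than the whole product.

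First I would observe that, for any vertex $v$ of a complete binary tree $s$, the subtree $s_v$ rooted at $v$ is itself a complete binary tree. This is because the defining property---each vertex has either two children or none---is a purely local condition that is inherited by every rooted subtree. By definition $h_v$ equals the number of vertices of $s_v$, so it suffices to prove that any complete binary tree has an odd number of vertices.

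The key step is therefore counting the vertices of a complete binary tree, which I would do by a short induction on its size. A complete binary tree is either a single leaf, having $1$ vertex (odd), or it consists of a root together with two non-empty complete binary subtrees $s'$ and $s''$. By the induction hypothesis $s'$ and $s''$ have $2a+1$ and $2b+1$ vertices respectively, so the full tree has $1+(2a+1)+(2b+1)=2(a+b+1)+1$ vertices, again odd. (One can avoid induction altogether by an edge count: if the tree has $i$ internal vertices then it has exactly $2i$ edges; comparing this with the $N-1$ edges of any $N$-vertex tree forces $N=2i+1$.)

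Putting the pieces together, each $h_v$ is odd, so $H(s)=\prod_{v\in s} h_v$ is a product of odd numbers and hence odd. I do not expect a real obstacle in this argument; the only point that deserves a moment's care is the verification that the class of complete binary trees is closed under passing to rooted subtrees, which is precisely what makes the inductive (equivalently, recursive) structure go through cleanly.
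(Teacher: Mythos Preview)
Your argument is correct and matches the paper's approach: the paper does not give a formal proof of this lemma but simply records, just before stating it, that ``the hook length $h_v$ is always an odd integer,'' which is exactly the stronger statement you establish. Your induction (or the equivalent edge-count) supplies the routine justification the paper omits.
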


By Lemma \ref{lem:hooklength},
Proposition \ref{th:divisibilitybar} has the following equivalent form.

\begin{prop}\label{th:divisibilitybarh}
	For each $\bar s\in \mathcal{\bar S}(2n+1)$, the integer
	$(2n+2)H(\bar s)T(\bar s)$ is divisible by $2^{2n}$.
\end{prop}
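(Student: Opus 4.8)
The plan is to collapse the two non-trivial factors into the single integer $\#\bar s\,(2n+2)!$ and then read off its $2$-adic valuation. A complete binary tree on $2n+1$ vertices has $n$ internal vertices and $\ell:=n+1$ leaves. By the hook length formula \eqref{eq:hooklength} together with \eqref{eq:Ts},
\[
T(\bar s)=\#\bar s\cdot\frac{(2n+1)!}{H(\bar s)},
\qquad\text{so}\qquad
(2n+2)\,H(\bar s)\,T(\bar s)=\#\bar s\,(2n+2)! .
\]
Thus the proposition is equivalent to $v_2\bigl(\#\bar s\,(2n+2)!\bigr)\ge 2n$, where $v_2$ denotes the $2$-adic valuation. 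Writing $s_2(m)$ for the number of $1$'s in the binary expansion of $m$, Legendre's formula gives $v_2((2n+2)!)=(2n+2)-s_2(2n+2)$, and since $2n+2=2\ell$ has the same binary digit sum as $\ell$ this equals $2n+2-s_2(\ell)$. Everything therefore reduces to the single inequality
\[
v_2(\#\bar s)\ \ge\ s_2(\ell)-1 .
\]

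To prove this key bound I would exploit the group structure implicit in pivoting. The $n$ basic pivotings generate an action of $(\mathbb{Z}/2)^n$ on $\mathcal{S}(2n+1)$ whose orbit through any $s\in\bar s$ is precisely $\bar s$, so orbit--stabilizer gives $\#\bar s=2^n/|A|$, where $A\le(\mathbb{Z}/2)^n$ is the stabilizer of $s$. In particular $\#\bar s$ is a power of $2$ and $v_2(\#\bar s)=n-v_2(|A|)$. Since a tree automorphism is determined by its effect on the leaves --- it preserves least common ancestors, so fixing every leaf forces the identity --- the $2$-group $A$ embeds into the symmetric group $S_\ell$. By Lagrange $|A|$ divides $\ell!$, and as $|A|$ is a power of $2$ this yields $v_2(|A|)\le v_2(\ell!)=\ell-s_2(\ell)=(n+1)-s_2(\ell)$. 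Hence $v_2(\#\bar s)=n-v_2(|A|)\ge s_2(\ell)-1$, as required.

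Combining the two displays yields $v_2\bigl(\#\bar s\,(2n+2)!\bigr)\ge\bigl(s_2(\ell)-1\bigr)+\bigl(2n+2-s_2(\ell)\bigr)=2n+1$, which proves the proposition with one power of $2$ to spare --- precisely the slack needed to divide out the odd factor $2H(\bar s)$ and recover Proposition \ref{th:divisibilitybar}. I expect the genuine difficulty to sit entirely in the bound $v_2(\#\bar s)\ge s_2(\ell)-1$: once the power-of-two structure of $\#\bar s$ and its tie to $v_2(\ell!)$ are in hand, the divisibility is automatic. A fully elementary substitute for the group-theoretic step proves the same bound by structural induction on $\bar s$, via the class-size recursion $\#\bar s=2\,\#\bar s_1\,\#\bar s_2$ when the root's two subtrees are non-isomorphic and $\#\bar s=(\#\bar w)^2$ when they coincide (common subtree class $\bar w$); the induction then needs only $s_2(2m)=s_2(m)$ and $s_2(a+b)\le s_2(a)+s_2(b)$ to match, respectively, the doubling of $\ell$ in the symmetric case and its addition across the root in the asymmetric one.
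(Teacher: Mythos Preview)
Your argument is correct and reaches the same conclusion as the paper (indeed the same extra factor of $2$, yielding $2^{2n+1}\mid (2n+2)H(\bar s)T(\bar s)$), but by a genuinely different route after the common first step $(2n+2)H(\bar s)T(\bar s)=\#\bar s\,(2n+2)!$. The paper proceeds purely combinatorially: it enlarges $s$ to a tree $s^{+}$ with $2n+2$ leaves by grafting a three-vertex tree at every leaf, interprets $\#\bar s\,(2n+2)!$ as the number of leaf-labellings of the shapes $s^{+}$ with $s\in\bar s$, and observes that pivoting at the $2n+1$ internal vertices of $s^{+}$ acts \emph{freely} on this set (distinct leaf labels prevent any fixed points), so each orbit has size $2^{2n+1}$. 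You instead split the valuation as $v_2((2n+2)!)=2n+2-s_2(\ell)$ by Legendre and then bound $v_2(\#\bar s)\ge s_2(\ell)-1$ by embedding the automorphism $2$-group into $S_\ell$ and invoking Lagrange. The paper's approach is the one that generalises verbatim to $k$-ary trees in Section~\ref{sec:kary}; your approach avoids the $s^{+}$ construction and isolates the sharp, independently interesting inequality $v_2(\#\bar s)\ge s_2(\ell)-1$.

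One phrasing to tighten: the sentence ``the $n$ basic pivotings generate an action of $(\mathbb{Z}/2)^n$ on $\mathcal{S}(2n+1)$'' is not literally correct, since pivots at different internal vertices do not assemble into a global abelian group action on all of $\mathcal{S}(2n+1)$ (after a pivot at the root, which vertex a second generator should pivot is ambiguous). What you are actually using --- and what makes the orbit--stabilizer step valid --- is that for the fixed underlying unordered tree $T$ there are exactly $2^n$ plane structures, the group $\mathrm{Aut}(T)$ acts freely on them (an automorphism fixing a plane structure fixes the root, then its left child, then its right child, and so on), and hence $\#\bar s=2^{n}/|\mathrm{Aut}(T)|$ with $|\mathrm{Aut}(T)|$ a power of $2$. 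With that formulation your Lagrange bound $|\mathrm{Aut}(T)|\mid \ell!$ and the rest of the argument go through exactly as written.
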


\begin{proof}
	By identities \eqref{eq:Ts} and \eqref{eq:hooklength} we have
\begin{align}
(2n+2)H(\bar s)T(\bar s)&=(2n+2)H(\bar s)\times \#\bar s \times \#\mathcal{L}(\bar s) \nonumber\\
&=(2n+2)\times \#\bar s\times (2n+1)! \nonumber \\
	&=(2n+2)!\times \#\bar s.\label{eq:combinatorialinterpretation}
\end{align}

Suppose that $s$ is a complete binary tree with $2n+1$ vertices,
then $s$ has $n+1$ leaves.
Let $s^+$ be the complete binary tree with $4n+3$ vertices obtained
from $s$ by replacing each leaf of $s$ by the complete binary tree
with 3 vertices. So $s^+$ has $2n+2$ leaves.
Let $\mathcal{L}^+(s^+)$
be the set of all leaf-labelled trees of shape $s^+$, 
obtained from $s^+$ by labeling its $2n+2$ leaves 
 with $\{1,2,\ldots, 2n+2\}$.
It is clear that $\#\mathcal{L}^+(s^+)=(2n+2)!$. By (\ref{eq:combinatorialinterpretation}) we have
	the following
combinatorial interpretation:

\medskip
	{\it
	For each $\bar s\in \mathcal{\bar S}(2n+1)$, the number of all leaf-labelled trees of shape $s^+$ such that $s\in \bar s$
is equal to $(2n+2)H(\bar s)T(\bar s)$.
	}

\medskip

This time we take the pivoting for an equivalence equation
on the set of leaf-labelled trees
	$\cup_{s\in \bar s}\mathcal{L}^+(s^+)$.
Since a leaf-labelled tree $s^+$ has $2n+1$ non-leaf vertices, and each non-trivial sequence of
	pivotings will make a difference on the labels of leaves, every equivalence class contains $2^{2n+1}$ elements.
Hence,
we can conclude
	that $(2n+2)H(\bar s)T(\bar s)$ is divisible by $2^{2n+1}$.
\end{proof}

For example, in Fig.~3, we reproduce a labelled tree with $9$ vertices
	and a leaf-labelled tree with $19$ vertices.
	There are $4$ non-leaf vertices
	in the labelled tree and the $9$ non-leaf vertices
	in the leaf-labelled tree, as indicated by the fat dot symbol ``$\bullet$''.
	Comparing with the traditional combinatorial model,  our method
	increases the number of non-leaf vertices. Consequently, we
	establish a stronger divisibility property.

\medskip

\pythonbegin
beginfig(3, "1.6mm");
setLegoUnit([3,3])
#showgrid([0,0], [20,14])  # show grid
dist=[1.6, 1.6, 1]
r=0.15
rtext=r+0.5  # distance between text and the point

def ShowPoint(ptL, labelL, dir, fill=True):
	[circle(p[z-1], r, fill=fill) for z in ptL]
	[label(p[ptL[z]-1], labelL[z], dist=[rtext, rtext], dist_direction=dir) for z in range(len(ptL))]

p=btree([6,4,7,2,8,5,9,1,3],pt=[7,0], dist=dist, dot="frame", dotradius=r, labeled=False)
ShowPoint([1,2,4], [1,2,4], 135, fill=True)
ShowPoint([6,7,8,9,3], [8,5,7,9,3], 270, fill=False)
ShowPoint([5], [6], 60, fill=True)
label(addpt(p[0],[0,2.2]), "Labeled tree")
label(addpt(p[0],[0,1.4]), "$n=4$ non-leaf vertices")

# third tree is composed by 2 trees, because dist is not equal: (9--4) small
dist=[2.2, 2, 1, 0.5]
pa=[19,0]
p=btree([10,6,11,4,12,7,13,2,14,8,15,5,16,9,17,1,3],pt=pa, dist=dist, dot="frame", dotradius=r)
[circle(p[z-1], r, fill=True) for z in [1,2,3,4,5,6,7,8,9]]
ShowPoint([10,11,12,13,14,15,16,17], [5,8,2,6,1,7,10,3], 270, fill=False)

p=btree([2,1,3], pt=p[2], dist=[dist[3]], dot="frame", dotradius=r)
ShowPoint([2,3], [9,4], 270, fill=False)
label(addpt(pa,[-0.8,2.2]), "Leaf-labelled tree")
label(addpt(pa,[-0.8,1.4]), "$2n+1=9$ non-leaf vertices")

endfig();
\pythonend
\begin{center}{\includegraphics[width=0.8\textwidth]{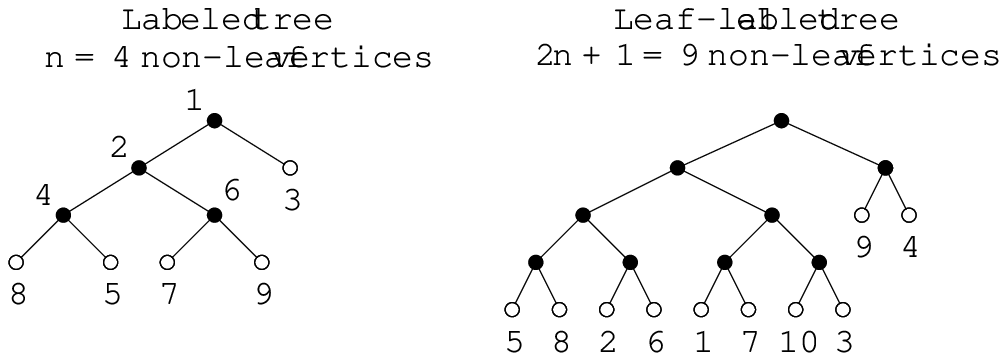}}\end{center}
\begin{center}{Fig.~3.~Trees, non-leaf vertices and divisibilities}\end{center}

\medskip

For proving Theorem \ref{th:tan}, it remains to show that
$G_{2n+2}=\sum G(\bar s)$ is an odd number.
Since $H(\bar s)$ is odd, we need only to
prove that the {\it weighted Genocchi number}
\begin{equation}\label{def:fn}
f(n)=\sum_{\bar s\in \mathcal{\bar S}(2n+1)} H(\bar s)G(\bar s)
\end{equation}
is odd.
For example, in Fig. 2.,
$G_{10}=G(\overline{s_1})+G(\overline{s_2})+G(\overline{s_3})=60+25+70=155$,
and
\begin{align*}
	f(4)&= H(\overline{s_1})G(\overline{s_1})+H(\overline{s_2})G(\overline{s_2})+H(\overline{s_3})G(\overline{s_3}) \cr
	&=3\cdot5\cdot7\cdot9\cdot60
+3\cdot3\cdot7\cdot9\cdot25+3\cdot3\cdot5\cdot9\cdot70\cr
	&=(3\cdot5\cdot7)^2\cdot 9.
\end{align*}
The weighted Genocchi number $f(n)$ is more convenient for us to study, since
it has an explicit simple expression.

\medskip

\begin{thm}\label{th:fn}
	Let $f(n)$ be the weighted Genocchi number defined in \eqref{def:fn}.
	Then,
\begin{equation}
f(n)=(1\cdot 3 \cdot 5 \cdot 7 \cdots (2n-1))^2 \cdot (2n+1)=(2n-1)!!\cdot (2n+1)!!.
\end{equation}
\end{thm}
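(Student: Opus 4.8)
The plan is to unfold every definition and watch the hook-length products cancel. Starting from \eqref{def:fn} and substituting $G(\bar s)=(n+1)T(\bar s)/2^{2n}$, the factorization $T(\bar s)=\#\bar s\times\#\mathcal L(\bar s)$ from \eqref{eq:Ts}, and the hook length formula $\#\mathcal L(\bar s)=(2n+1)!/H(\bar s)$ from \eqref{eq:hooklength}, each summand collapses to
\begin{equation*}
H(\bar s)G(\bar s)=H(\bar s)\cdot\frac{(n+1)\,\#\bar s\,\#\mathcal L(\bar s)}{2^{2n}}=\frac{(n+1)(2n+1)!}{2^{2n}}\,\#\bar s .
\end{equation*}
The factor $H(\bar s)$ cancels against the denominator of the hook length formula; this is exactly why $f(n)$ has a simpler closed form than $G_{2n+2}$ and is the reason for weighting by $H(\bar s)$ in the first place.

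Summing over all equivalence classes leaves only the purely combinatorial quantity $\sum_{\bar s}\#\bar s$. Since each class $\bar s$ gathers the distinct complete binary trees related by pivoting, this sum is just the total number $\#\mathcal S(2n+1)$ of complete binary trees with $2n+1$ vertices. Such a tree has $n$ internal vertices and $n+1$ leaves, and full binary trees of this kind are enumerated by the Catalan number, so $\#\mathcal S(2n+1)=C_n=\tfrac{1}{n+1}\binom{2n}{n}$; this is consistent with Fig.~1, where $n=3$ yields the $C_3=5$ trees $s_1,\dots,s_5$. Substituting gives
\begin{equation*}
f(n)=\frac{(n+1)(2n+1)!}{2^{2n}}\cdot\frac{1}{n+1}\binom{2n}{n}=\frac{(2n+1)!\,(2n)!}{2^{2n}(n!)^2}.
\end{equation*}

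It then remains to recognize this closed form as $(2n-1)!!\,(2n+1)!!$. Using the standard reductions $(2n-1)!!=(2n)!/(2^n n!)$ and $(2n+1)!!=(2n+1)!/(2^n n!)$, their product is precisely $(2n)!\,(2n+1)!/(2^{2n}(n!)^2)$, matching the expression above. I expect the only genuine obstacle to be the combinatorial identification $\sum_{\bar s}\#\bar s=C_n$; once the Catalan count is secured, the rest is routine double-factorial algebra. I would therefore present the Catalan step carefully, cross-check it against the $n=4$ data (where $f(4)=(3\cdot5\cdot7)^2\cdot 9=99225=7!!\cdot 9!!$), and then close with the double-factorial identities.
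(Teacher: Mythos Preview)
Your argument is correct and follows essentially the same route as the paper: you unfold $G(\bar s)$ via $T(\bar s)=\#\bar s\cdot\#\mathcal L(\bar s)$ and the hook length formula so that $H(\bar s)$ cancels, then identify $\sum_{\bar s}\#\bar s=\#\mathcal S(2n+1)=C_n$ and simplify the resulting Catalan expression to $(2n-1)!!\,(2n+1)!!$. The only cosmetic difference is that the paper writes the common factor as $(2n+2)!/2^{2n+1}$ rather than $(n+1)(2n+1)!/2^{2n}$, and omits the explicit double-factorial identities you supply.
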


\begin{proof}
	We successively have
\begin{align*}
f(n)&= \displaystyle\sum_{\bar s} H(\bar s)G(\bar s)  \\
	&= \displaystyle\sum_{\bar s} \displaystyle\frac{H(\bar s) (n+1) T(\bar s)}{2^{2n}} \\
	&= \displaystyle\sum_{\bar s} \displaystyle\frac{(2n+2)! \times \#\bar s}{2^{2n+1}} \\
	&= \displaystyle\frac{(2n+2)!}{2^{2n+1}} \sum_{\bar s} \#\bar s    \\
	&= \displaystyle\frac{(2n+2)!}{2^{2n+1}}\cdot \#\mathcal{S}(2n+1).
\end{align*}

While $\#\mathcal{S}(2n+1)$ equals to the Catalan number $C_n$, we can calculate
that
\begin{align*}
f(n)&= \frac{(2n+2)!}{2^{2n+1}}\cdot C_n  \\
	&= \frac{(2n+2)!}{2^{2n+1}}\cdot \frac{1}{n+1}
	\binom{2n}{n} \\
	&= (2n-1)!!\cdot (2n+1)!!.\qedhere
\end{align*}
\end{proof}

From  Theorem \ref{th:fn}, the weighted Genocchi number $f(n)$ is an odd number.
Therefore, the normal Genocchi number $G_{2n+2}$ is also odd.
This achieves the proof of Theorem~\ref{th:tan}.

\section{Generalizations to $k$-ary trees} \label{sec:kary} 

\medskip

In this section we assume that $k\geq 2$ is an integer.

Recall the {\it hook length formula} for binary trees
described in Section 2.
For general rooted trees $t$ (see \cite[\S5.1.4, Ex. 20]{Knuth1998Vol3}),
we also have

\begin{equation}
	\#\mathcal{L}(t)=\frac{n!}{\prod_{v\in t} h_v},
\end{equation}
where $\mathcal{L}(t)$ denote the set of all
{\it increasing labelled trees} of shape $t$.

Let $L_{kn+1}$ be the number of increasing labelled complete $k$-ary trees with $kn+1$ vertices. Then,
\begin{align}
	L_{kn+1}=\sum_{n_1+\cdots+n_k=n-1}\binom{kn}{kn_1+1, \cdots, kn_k+1}L_{kn_1+1}\cdots L_{kn_k+1}.
\end{align}
Equivalently, the exponential generating function $\phi(x)$ for $L_{kn+1}$
\begin{align*}
\phi(x)=\sum_{n\ge 0}L_{kn+1}\frac{x^{kn+1}}{(kn+1)!}
\end{align*}
is the solution of the differential equation
\begin{equation}\label{eq:phi}
\phi'(x)=1+\phi^k(x)
\end{equation}
such that $\phi(0)=0$.

Let $\psi(x)$ be the exponential generating function for
$M_{k^2 n-kn+k}$ which is defined in Theorem \ref{th:kary},
$$\psi(x):=
\sum_{n\ge 0}M_{k^2 n-kn+k}\frac{x^{k^2n-kn+k}}{(k^2n-kn+k)!}.$$
Then
\begin{equation}\label{eq:psi}
{\psi(x)}=x \cdot \phi\left(\displaystyle \frac{x^{k-1}}{k!}\right).
\end{equation}

From identities \eqref{eq:phi} and \eqref{eq:psi}, Theorem \ref{th:kary} can be restated in the form of power series and differential equations:
\begin{cor}
Let  $\psi(x)$ be a power series satisfying the following differential equation
$$
 x\psi'(x)-\psi(x)=\frac{k-1}{k!}\Bigl(x^k+\psi^k(x)\Bigr),
$$
with $\psi(0)=0$.
Then, for each $n\geq 1$, the coefficient of
	$\displaystyle\frac{x^{k^2n-kn+k}}{(k^2n-kn+k)!}$ in $\psi(x)$ is an integer.
Moreover, it is congruent to

	$(i)$  $1 \pmod k$, if $k=p$;

	$(ii)$ $1 \pmod {p^2}$, if $k=p^t$ with $t\geq 2$;

 $(iii)$ $0 \pmod k$, otherwise.
\end{cor}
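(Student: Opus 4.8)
The plan is to prove Theorem~\ref{th:kary} by following the same combinatorial strategy that worked for binary trees, adapting each ingredient to the $k$-ary setting. First I would recall that the number $L^{(k)}_{kn+1}$ of increasing labelled complete $k$-ary trees satisfies the hook length formula, so that for any shape $s$ with $kn+1$ vertices we have $\#\mathcal{L}(s)=(kn+1)!/H(s)$ where $H(s)=\prod_{v}h_v$. As in Section~\ref{sec:tan}, I would introduce the pivoting equivalence relation, now allowing arbitrary permutations of the $k$ subtrees at each internal vertex, grouping shapes into classes $\bar s$ with weights $\#\bar s$ and $T(\bar s)=\#\bar s\cdot\#\mathcal{L}(\bar s)$. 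The key structural fact to establish is the $k$-ary analogue of Lemma~\ref{lem:hooklength}: for a complete $k$-ary tree each hook length satisfies $h_v\equiv 1\pmod{k-1}$ (since every internal vertex contributes $k$ children, a straightforward induction gives $h_v=1+(k-1)\cdot(\text{number of internal descendants})$), and in particular $H(s)$ is coprime to $k-1$. This congruence is what will eventually drive the modular computation.

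Next I would set up the divisibility by $(k!)^{kn+1}$. Mirroring the leaf-labelling construction, I would form $s^{+}$ by replacing each of the $(k-1)n+1$ leaves of $s$ with a complete $k$-ary star on $k+1$ vertices, so that $s^{+}$ has $k\cdot\bigl((k-1)n+1\bigr)=k^2n-kn+k$ leaves and $kn+1$ internal vertices. Counting leaf-labelled trees of shape $s^{+}$ then yields the combinatorial identity
\begin{equation}\label{eq:karycount}
(k^2n-kn+k)!\,\#\bar s = H(\bar s)\,T(\bar s)\,(k^2n-kn+k)!/(kn+1)!,
\end{equation}
after which the pivoting action of the group $(\mathfrak{S}_k)^{kn+1}$ of order $(k!)^{kn+1}$ on the leaf labels shows that each orbit has size exactly $(k!)^{kn+1}$ when the action is free. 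Establishing freeness is the first real obstacle: unlike the binary case where every nontrivial pivoting changes a leaf label, for $k\ge 3$ one must argue that no nontrivial element of $(\mathfrak{S}_k)^{kn+1}$ fixes a leaf-labelled tree, which follows because permuting the children of any internal vertex permutes the (distinct) label-sets of its subtrees nontrivially. This gives part~(a), the divisibility of $(k^2n-kn+k)!\,L^{(k)}_{kn+1}/(kn+1)!$ by $(k!)^{kn+1}$.

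For part~(b) I would introduce the weighted quantity $f^{(k)}(n)=\sum_{\bar s}H(\bar s)M^{(k)}(\bar s)$ exactly as $f(n)$ was used in Theorem~\ref{th:fn}, telescoping the sum via \eqref{eq:karycount} to obtain a closed form
$$
f^{(k)}(n)=\frac{(k^2n-kn+k)!}{(k!)^{kn+1}}\cdot\#\mathcal{S}^{(k)}(kn+1),
$$
where $\#\mathcal{S}^{(k)}(kn+1)$ is the Fuss--Catalan number $\frac{1}{kn+1}\binom{kn+1}{n}$ counting complete $k$-ary trees. Because $H(\bar s)\equiv 1\pmod{k-1}$ and, more usefully, each $h_v$ is coprime to every prime dividing $k$, reducing $M^{(k)}_{k^2n-kn+k}$ modulo $k$ is equivalent to reducing $f^{(k)}(n)$ modulo the primes dividing $k$. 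I would then evaluate this Fuss--Catalan expression modulo $k$ using Kummer's and Lucas' theorems on the $p$-adic valuations of the factorials in $\frac{(k^2n-kn+k)!}{(k!)^{kn+1}}\cdot\frac{1}{kn+1}\binom{kn+1}{n}$.

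The main obstacle I anticipate is precisely this last modular evaluation, namely showing that the closed form splits into the three cases $1\pmod p$ for $k=p$, $1\pmod{p^2}$ for $k=p^t$ with $t\ge2$, and $0\pmod k$ otherwise. The trichotomy must come from a careful count of carries when writing the relevant integers in base $p$: for a prime power $k=p^t$ the factor $(k!)^{kn+1}$ in the denominator absorbs almost all of the $p$-adic valuation of the numerator, leaving a controlled residue, whereas when $k$ has two distinct prime factors the valuation at one of them is strictly positive, forcing the quotient to vanish modulo $k$. I would expect to isolate a single "leading" carry term, use Wilson's theorem to handle the unit part of $p!/p=(p-1)!\equiv-1\pmod p$, and verify the $p^2$ refinement in the prime-power case by a second-order expansion. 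Getting the bookkeeping of carries to land exactly on these congruences, rather than merely bounding the valuation, is where the delicate work lies.
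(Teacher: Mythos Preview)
Your overall architecture mirrors the paper's: leaf-augmented trees $s^{+}$, free $(\mathfrak{S}_k)^{kn+1}$-action to get divisibility, the weighted sum $f^{(k)}(n)$ collapsing to a Fuss--Catalan expression, then a case analysis modulo $k$. That part is fine.

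There is, however, a genuine error in your key structural lemma. You claim $h_v\equiv 1\pmod{k-1}$ via $h_v=1+(k-1)\cdot(\text{internal descendants})$. This is false: a complete $k$-ary subtree with $m$ internal vertices has $km+1$ total vertices, so $h_v=km+1\equiv 1\pmod{k}$, not $\pmod{k-1}$. (Check $k=2$, a $3$-vertex tree: your formula gives $h_{\mathrm{root}}=1+1\cdot 1=2$, but $h_{\mathrm{root}}=3$.) Your subsequent assertion that ``each $h_v$ is coprime to every prime dividing $k$'' does not follow from a $\pmod{k-1}$ congruence at all---for $k=4$ the congruence mod $3$ says nothing about parity. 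The whole passage from $f^{(k)}(n)$ to $M^{(k)}_{k^2n-kn+k}$ modulo $k$ rests on $H(\bar s)\equiv 1\pmod{k}$, so you need the correct congruence; fortunately the fix is exactly the statement $h_v\equiv 1\pmod{k}$, which is what the paper uses.

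On the final modular evaluation, your plan to use Kummer/Lucas and carry counts is in the right spirit but vaguer than necessary. The paper gets much more traction by first rewriting the closed form as a product of binomial coefficients,
\[
f^{(k)}(n)=\prod_{i=0}^{kn-n}\binom{ik+k-1}{k-1}\ \prod_{j=0}^{n-1}\binom{jk+k-1}{k-1},
\]
after which the three cases are handled by elementary means: for $k=p$ each factor is $\equiv 1\pmod p$ directly; for $k=p^t$ an explicit identity $(pm+1)\cdots(pm+p-1)\equiv (p-1)!\pmod{p^2}$ (valid for $p\ge 3$, with a separate check for $p=2$) lets one telescope $\binom{ip^t+p^t-1}{p^t-1}\equiv\binom{ip^{t-1}+p^{t-1}-1}{p^{t-1}-1}\pmod{p^2}$ down to $1$; and for composite $k$ one observes $f(n)\mid f(n+1)$ so it suffices to show $f(1)=(k^2)!/(k!)^{k+1}\equiv 0\pmod{p^m}$ for each prime power $p^m\|k$, which is a straightforward Legendre-formula estimate. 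This product form is the missing concrete step that turns your ``delicate bookkeeping of carries'' into a routine computation.
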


\medskip

When $k=2$, $L_{2n+1}$ is just the tangent number $T_{2n+1}$ and $M_{2n+2}$ is the Genocchi number $G_{2n+2}$.
For $k=3$ and $4$, the initial values of $L_{kn+1}$ and $M_{k^2 n-kn+k}$ are
reproduced below:

\medskip

\[\begin{tabular}{c|c|c}
	$n$ & $L_{3n+1}$ & $M_{6n+3}$ \\
\hline
	0 & 1 & 1 \\
	1 & 6 &  70 \\
	2 & 540 & 500500 \\
	3 & 184680 & 43001959000\\
	4 & 157600080 & 21100495466050000 \\
	5 & 270419925600 & 39781831724228093500000
\end{tabular}\]

\smallskip
\centerline{Table for $k=3$}

\medskip

\[\begin{tabular}{c|c|c}
	$n$ & $L_{4n+1}$ & $M_{12n+4}$ \\
\hline
	0 & 1 & 1  \\
	1 & 24 & 525525  \\
	2 & 32256 & 10258577044340625  \\
	3 & 285272064 & 42645955937142729593062265625 \\
	4 & 8967114326016 & 6992644904557760596067178252404694486328125  \\
\end{tabular}\]
\centerline{Table for $k=4$}

\medskip

\medskip

Now we define an equivalence relation ({\it $k$-pivoting}) on the set of all (unlabelled) complete $k$-ary trees $\mathcal{R}(kn+1)$.
	A {\it basic $k$-pivoting}  is a rearrangement of
	the $k$ subtrees of a non-leaf vertex $v$.
%
Let $r_1$, $r_2$ be two complete $k$-ary trees, if $r_1$ can be changed to $r_2$ by a finite sequence of basic $k$-pivotings, we write $r_1\sim r_2$.
Hence the set of all complete $k$-ary trees can be partitioned into several equivalence classes. Let $\mathcal{\bar R}(kn+1) = \mathcal{R}(kn+1)/\!\!\sim$, define $\#\mathcal{L}(\bar r) = \#{\mathcal{L}}(r)$ for $r \in\bar r$,
then we have
\begin{equation}\label{eq:knormaltree}
 \sum_{\bar r\in\mathcal{\bar R}(kn+1)} L(\bar r)= L_{kn+1},
\end{equation}
where
\begin{equation}
L(\bar r)=\sum_{r\in \bar r} \# \mathcal{L}(r) = \#\bar r\times \#\mathcal{L}(\bar r).
\end{equation}

\medskip

Similar to the case of the tangent numbers,
this equivalence relation implies that $L(\bar r)$ is divisible by $(k!)^n$.
There is still a stronger divisibility, stated as below:

\begin{lem}\label{lem:kdivisibility}
	For each $\bar r\in \mathcal{\bar R}(kn+1)$,
the number $(k^2 n-kn+k)!L(\bar r)/(kn+1)!$ is divisible by $(k!)^{kn+1}$.
\end{lem}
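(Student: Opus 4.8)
The plan is to generalize the leaf-labelling construction from the binary case (Proposition~\ref{th:divisibilitybarh}) to $k$-ary trees, so that the divisibility by $(k!)^{kn+1}$ emerges from counting leaf-labelled trees modulo the $k$-pivoting action. First I would establish the $k$-ary analogue of Lemma~\ref{lem:hooklength}: for a complete $k$-ary tree $r$ with $kn+1$ vertices, the product of hook lengths $H(r)=\prod_{v\in r}h_v$ satisfies $\gcd(H(r),k!)=1$, i.e.\ $H(r)$ shares no prime factor with $k!$. This should follow inductively, since every hook length is $\equiv 1 \pmod{k}$ (as each subtree contributes a multiple of $k$ vertices plus the vertex itself), and more carefully one checks the hook lengths avoid all primes $\le k$; I would record this as a preliminary step because it is exactly the coprimality needed to later cancel $H(\bar r)$ against powers dividing $(k!)^{kn+1}$.

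Next, following the binary template, I would reformulate the claim using the hook length formula. By $\#\mathcal{L}(\bar r)=(kn+1)!/H(\bar r)$ together with $L(\bar r)=\#\bar r\times\#\mathcal{L}(\bar r)$, the quantity $(k^2n-kn+k)!\,L(\bar r)/(kn+1)!$ rewrites as $(k^2n-kn+k)!\,\#\bar r/H(\bar r)$. The combinatorial heart is then a leaf-labelling: from $r$ I would build $r^+$ by attaching to each of the $kn - kn + 1 = \dots$ leaves a complete $k$-ary tree on $k+1$ vertices (replacing each leaf by a small complete $k$-ary ``claw''), producing a tree whose number of leaves equals $k^2n - kn + k$. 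The key identity to verify is that the number of leaf-labelled trees of shape $r^+$ with $r\in\bar r$ equals $(k^2n-kn+k)!\,H(\bar r)\,L(\bar r)/(kn+1)!$ up to the factor of $H(\bar r)$, matching the structure of equation~\eqref{eq:combinatorialinterpretation}; more precisely I expect $\#\mathcal{L}^+(r^+)=(k^2n-kn+k)!$ and the product over the whole pivoting orbit to assemble the desired expression.

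Finally, I would apply the $k$-pivoting action to the set $\bigcup_{r\in\bar r}\mathcal{L}^+(r^+)$ of leaf-labelled trees. The action rearranges the $k$ subtrees at each of the non-leaf vertices, and there are $kn+1$ non-leaf vertices in $r^+$, each contributing a factor of $k!$ to the stabilizer-free orbit size. The crucial point—mirroring the remark in the binary proof that ``each non-trivial sequence of pivotings will make a difference on the labels of leaves''—is that because the leaves carry distinct labels, no nontrivial pivoting fixes a given leaf-labelled tree, so every orbit has size exactly $(k!)^{kn+1}$. This yields divisibility of the counted quantity, hence of $(k^2n-kn+k)!\,L(\bar r)/(kn+1)!$, by $(k!)^{kn+1}$.

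The main obstacle I anticipate is the free-action claim: in the binary case the two subtrees are always distinguishable after leaf-labelling, but for $k\ge 3$ one must confirm that \emph{every} permutation in the symmetric group $S_k$ acting at a given vertex genuinely moves some leaf label, so that the full group $\prod_v S_k \cong (S_k)^{kn+1}$ of order $(k!)^{kn+1}$ acts freely. This requires that at each internal vertex the $k$ subtrees, once their leaves are labelled by distinct integers, are pairwise distinct as labelled objects—true because the leaf label sets are disjoint—so no subtree-permutation can be an automorphism. I would also need to double-check the vertex and leaf counts for $r^+$ (that it has $k^2n - kn + k$ leaves and $kn+1$ internal vertices where the original structure lives), since the bookkeeping is slightly more delicate than in the binary case where the claw has $3$ vertices.
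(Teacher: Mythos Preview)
Your combinatorial skeleton---replacing leaves by $k$-claws to obtain $r^+$ with $kn+1$ internal vertices and $k^2n-kn+k$ leaves, then letting $(S_k)^{kn+1}$ act freely on leaf-labellings---is exactly the paper's approach and is correct. The gap is in how you dispose of $H(\bar r)$.

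You claim $\gcd(H(r),k!)=1$, arguing that hook lengths ``avoid all primes $\le k$''. This is false already for $k=3$: the tree with one root and three leaves has $H=4$, and $\gcd(4,3!)=2$. For $k=4$ the root of a nine-vertex tree has hook length $9$, so $3\mid H$ while $3\mid 4!$. What is true is only the weaker statement $h_v\equiv 1\pmod{k}$, hence $\gcd(H(r),k)=1$; coprimality with the full $k!$ does not follow. Consequently, from $(k!)^{kn+1}\mid (k^2n-kn+k)!\,\#\bar r$ you cannot directly conclude $(k!)^{kn+1}\mid (k^2n-kn+k)!\,\#\bar r/H(\bar r)$.

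The paper circumvents this by splitting $(k!)^{kn+1}=(k-1)!^{kn+1}\cdot k^{kn+1}$. The factor $(k-1)!^{kn+1}$ is shown to divide $(k^2n-kn+k)!/(kn+1)!$ \emph{outright}, via the explicit product
\[
\frac{(k^2 n-kn+k)!}{(kn+1)!\,(k-1)!^{kn+1}}
=(k^2n-kn+k)\prod_{i=1}^{kn+1}\binom{i(k-1)-1}{k-2},
\]
with no reference to $H(\bar r)$ at all. Only the remaining factor $k^{kn+1}$ is handled through the leaf-labelling argument, and for that the genuinely valid coprimality $\gcd(H(\bar r),k)=1$ suffices to pass from $(k!)^{kn+1}\mid (k^2n-kn+k)!\,\#\bar r$ to the desired statement. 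Your proof becomes correct once you insert this two-step decomposition in place of the erroneous $\gcd(H,k!)=1$ claim.
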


\begin{proof}
First, we show that the coefficient $(k^2 n-kn+k)!/(kn+1)!$ is divisible by $(k-1)!^{kn+1}$. In fact,
	\begin{equation}\label{eq:divk-1}
		\displaystyle\frac{(k^2 n-kn+k)!}{(kn+1)!\cdot (k-1)!^{kn+1}}
		=  (k^2n-kn+k)\cdot \displaystyle\prod_{i=1}^{kn+1}\binom{i(k-1)-1}{k-2}.
\end{equation}
	It remains to prove
\begin{equation}\label{eq:kdivisibility}
	k^{kn+1}\mid \frac{(k^2 n-kn+k)!\ L(\bar r)}{(kn+1)!\cdot (k-1)!^{kn+1}}.
\end{equation}

For each vertex $v$ in a complete $k$-ary tree $r$,
we observe that the hook length $h_v$ satisfies $h_v\equiv 1\pmod k$.
Thus,
\begin{align*}
H(\bar r)=\prod_{v\in r}h_v\equiv 1\pmod k.
\end{align*}
Consequently, relation (\ref{eq:kdivisibility}) is equivalent to
	\begin{align*}
	k^{kn+1}\mid \frac {(k^2 n-kn+k)!\ L(\bar r)H(\bar r)}{(kn+1)!\cdot (k-1)!^{kn+1}},
\end{align*}
which can be rewritten  as
	\begin{align}\label{eq:divk!}
		(k!)^{kn+1}\mid (k^2 n-kn+k)! \times \frac{L(\bar r)H(\bar r)} {(kn+1)!}.
\end{align}

We will prove this divisibility using the following combinatorial model.
Let $r$ be a complete $k$-ary tree with $kn+1$ vertices. It is easy to show that $r$ has $(k-1)n+1$ leaves.
Replacing all leaves of $r$ by the complete $k$-ary tree with $k+1$ vertices, we get a new tree with $k^2 n-kn+k$ leaves, denoted by $r^+$.
	Let $\mathcal{L}^+(r^+)$ be the set of all leaf-labelled tree of shape $r^+$, 
	obtained from $r^+$ by labeling all the leaves
with ${1,2,\ldots, k^2 n-kn+k}$.
It is clear that $\#\mathcal{L}^+(r^+)=(k^2 n-kn+k)!$.
On the other hand,
by the hook length formula we have
\begin{equation*}
		\displaystyle\frac{L(\bar r)H(\bar r)}{(kn+1)!}
	= \displaystyle\frac{H(\bar r)\times\#\bar r\times \#\mathcal{L}(r)}{(kn+1)!}
    = \#\bar r.
\end{equation*}
Thus,
	the right-hand side of \eqref{eq:divk!} is equal to
	$(k^2 n-kn+k)! \times \#\bar r $, that is,
the number of all leaf-labelled trees of shape $r^+$ such that $r\in\bar r$.

\medskip

\medskip

Translate the $k$-pivoting to the set of all leaf-labelled trees of shape $r^+$ such that $r\in\bar r$. It is easy to check that the $k$-pivoting is still an
equivalence relation.
Since a leaf-labelled tree has $kn+1$ non-leaf vertices, there are $(k!)^{kn+1}$ leaf-labelled trees in each equivalence class, which
	implies that
	the right-hand side of \eqref{eq:divk!}
	is divisible by $(k!)^{kn+1}$.
\end{proof}

The following two lemmas will be used for proving Theorem \ref{th:kary}.

\begin{lem}[Legendre's formula]\label{th:Legendre}
Suppose that $p$ is prime number.
For each positive integer $k$,
let $\alpha(k)$ be the highest power of $p$ dividing $k!$
and $\beta(k)$ be the sum of all digits of $k$ in base $p$.
	Then,
\begin{align}\label{eq:Legendre}
\alpha(k)=\sum_{i\ge 1}\left\lfloor\frac{k}{p^i}\right\rfloor
	=\frac {k-\beta(k)}{p-1}.
\end{align}
\end{lem}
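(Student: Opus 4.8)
The plan is to establish the two equalities separately, since the statement combines the classical Legendre counting formula with its base-$p$ digit reformulation. For the first equality, $\alpha(k)=\sum_{i\ge 1}\lfloor k/p^i\rfloor$, I would use a double-counting argument on the $p$-adic valuations. Writing $v_p(j)$ for the exponent of $p$ in $j$, one has $\alpha(k)=\sum_{j=1}^{k}v_p(j)$, since the power of $p$ dividing $k!=\prod_{j=1}^{k}j$ is the sum of the contributions of each factor. The key observation is that $v_p(j)$ equals the number of $i\ge 1$ with $p^i\mid j$, so that, writing $[\,\cdot\,]$ for the Iverson bracket (equal to $1$ when the condition holds and $0$ otherwise),
\begin{align*}
\alpha(k)=\sum_{j=1}^{k}v_p(j)=\sum_{j=1}^{k}\sum_{i\ge 1}[p^i\mid j]=\sum_{i\ge 1}\sum_{j=1}^{k}[p^i\mid j]=\sum_{i\ge 1}\left\lfloor\frac{k}{p^i}\right\rfloor,
\end{align*}
where the last step counts the multiples of $p^i$ in $\{1,\dots,k\}$, of which there are exactly $\lfloor k/p^i\rfloor$. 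All sums are finite because $\lfloor k/p^i\rfloor=0$ once $p^i>k$.

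For the second equality, I would expand $k$ in base $p$, writing $k=\sum_{j\ge 0}a_j p^j$ with $0\le a_j<p$ and $\beta(k)=\sum_{j\ge 0}a_j$. Then $\lfloor k/p^i\rfloor=\sum_{j\ge i}a_j p^{j-i}$, and summing over $i\ge 1$ while interchanging the order of summation gives
\begin{align*}
\sum_{i\ge 1}\left\lfloor\frac{k}{p^i}\right\rfloor=\sum_{j\ge 1}a_j\sum_{i=1}^{j}p^{j-i}=\sum_{j\ge 1}a_j\cdot\frac{p^{j}-1}{p-1}=\frac{1}{p-1}\sum_{j\ge 1}a_j(p^j-1).
\end{align*}
Since the $j=0$ term contributes nothing to $\sum_{j\ge 0}a_j(p^j-1)$, the right-hand side equals $\frac{1}{p-1}\sum_{j\ge 0}a_j(p^j-1)=\frac{k-\beta(k)}{p-1}$, which is the desired closed form.

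Because both directions reduce to routine manipulations of finite sums, there is no genuine obstacle here; the only points requiring care are the legitimacy of interchanging the two summation orders (justified by the finiteness of all sums) and the evaluation of the inner geometric series $\sum_{i=1}^{j}p^{j-i}=(p^j-1)/(p-1)$. As this is the classical Legendre formula, I expect the author's proof to follow essentially this two-step outline, or simply to invoke it as a standard fact before applying it in the proof of Theorem \ref{th:kary}.
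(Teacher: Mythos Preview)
Your proof is correct and follows the standard two-step derivation of Legendre's formula. The paper itself does not supply a proof of this lemma at all; it simply cites Dickson's \emph{History of the Theory of Numbers} (p.~263) and moves on, exactly as you anticipated in your final sentence.
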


For the proof of Lemma \ref{th:Legendre}, see \cite[p. 263]{Dickson1919}.

\begin{lem}\label{th:p2}
Let $p\ge 3$ be a prime number, then
\begin{align}\label{eq:p2}
(pk+1)(pk+2)\cdots(pk+p-1)\equiv (p-1)! \pmod{p^2}.
\end{align}
\end{lem}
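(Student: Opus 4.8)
The plan is to expand the product modulo $p^2$ and exploit the fact that any contribution containing $(pk)^2$ is already divisible by $p^2$ and may be discarded. The cleanest way to organize this is through a symmetric pairing of the factors. Since $p$ is odd, the $p-1$ factors can be grouped into $(p-1)/2$ pairs, matching the factor $pk+j$ with $pk+(p-j)$ for $j=1,2,\ldots,(p-1)/2$.

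First I would compute each paired product. Expanding and collecting the cross terms,
$$(pk+j)\bigl(pk+(p-j)\bigr)=(pk)^2+(pk)\,p+j(p-j).$$
Both $(pk)^2=p^2k^2$ and $(pk)\,p=p^2k$ are divisible by $p^2$, so each pair reduces to
$$(pk+j)\bigl(pk+(p-j)\bigr)\equiv j(p-j)\pmod{p^2}.$$

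Then I would multiply these congruences over all pairs. As $j$ runs through $1,\ldots,(p-1)/2$, the numbers $j$ and $p-j$ together run through every element of $\{1,2,\ldots,p-1\}$ exactly once, so
$$\prod_{j=1}^{p-1}(pk+j)\equiv\prod_{j=1}^{(p-1)/2}j(p-j)=\prod_{i=1}^{p-1}i=(p-1)!\pmod{p^2},$$
which is the desired identity.

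The step requiring care is the very formation of the pairing: it depends essentially on $p$ being odd, so that $p-1$ is even and the index $j$ never coincides with $p-j$ (which would force $2j=p$). This is precisely why the hypothesis $p\ge 3$ cannot be dropped; for $p=2$ the single factor $2k+1$ is not congruent to $1$ modulo $4$ in general. No deeper number theory (such as Wilson's theorem) is needed here — the argument rests entirely on the cancellation of the linear cross terms afforded by the symmetric pairing.
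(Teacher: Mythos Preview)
Your proof is correct and takes a genuinely different route from the paper. The paper expands the product as $\sum_{j=0}^{p-1}(pk)^{p-1-j}e_j$, where $e_j=e_j(1,\ldots,p-1)$ are the elementary symmetric functions, reduces modulo~$p^2$ to $(pk)e_{p-2}+(p-1)!$, and then argues that $e_{p-2}=(p-1)!\sum_i i^{-1}\equiv (p-1)!\sum_i i=(p-1)!\,\frac{p(p-1)}{2}\equiv 0\pmod p$. Your symmetric pairing $j\leftrightarrow p-j$ bypasses this computation entirely: the linear cross terms cancel \emph{within each pair}, so you never need to identify or evaluate the coefficient~$e_{p-2}$. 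The trade-off is that the paper's argument makes explicit why the hypothesis $p\ge 3$ enters (it is needed so that $p\mid\frac{p(p-1)}{2}$), whereas in your version it enters structurally, to make the pairing well-defined. Both arguments are short, but yours is the more self-contained of the two, avoiding any appeal to symmetric functions or to inversion in~$(\mathbb{Z}/p\mathbb{Z})^\times$.
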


\begin{proof}
The left-hand side of \eqref{eq:p2} is equal to
\begin{align*}
(pk)^{p-1}e_0+\cdots+(pk)^2e_{p-3}+(pk)e_{p-2}+e_{p-1}\equiv (pk)e_{p-2}+(p-1)!\pmod{p^2},
\end{align*}
	where $e_j:=e_j(1, 2, \cdots, p-1)$ are the elementary symmetric functions. See \cite{Macdonald1995}.
Since
\begin{align*}
e_{p-2}=(p-1)!\displaystyle\sum_{i}i^{-1}\equiv (p-1)!\sum_{i}i\equiv(p-1)!\frac{p(p-1)}2\equiv 0\pmod p,
\end{align*}
equality	\eqref{eq:p2} is true.
\end{proof}

\goodbreak

We are ready to prove Theorem \ref{th:kary}.

\begin{proof}[Proof of Theorem \ref{th:kary}]
The first part (a) is an immediate consequence
of Lemma \ref{lem:kdivisibility} and (\ref{eq:knormaltree}).
Let $n\ge 1$,
we construct the following weighted function
	\begin{equation*}
f(n)=\sum_{\bar r\in\mathcal{\bar R}(kn+1)}H(\bar r)M(\bar r),
\end{equation*}
where
\begin{align*}
M(\bar r)=\displaystyle\frac{(k^2 n-kn+k)!\, L(\bar r)}{(k!)^{kn+1}\, (kn+1)!}.
\end{align*}
Since $H(\bar r)\equiv 1\pmod k$, we have
	\begin{equation}\label{eq:modfn}
	f(n)\equiv \sum_{\bar r\in\mathcal{\bar R}(kn+1)}M(\bar r) = M_{k^2 n-kn+k}
	\pmod k.
\end{equation}
Thus, we only need to calculate $f(n)$.
\begin{align*}
f(n)&= \displaystyle\sum_{\bar r} H(\bar r)M(\bar r)  \\
	&= \displaystyle\sum_{\bar r} \displaystyle\frac{H(\bar r) \times (k^2 n-kn+k)!\, L(\bar r)}{(k!)^{kn+1}\, (kn+1)!} \\
	&= \displaystyle\sum_{\bar r} \displaystyle\frac{(k^2 n-kn+k)! \times \#\bar r}{(k!)^{kn+1}} \\
	&= \displaystyle\frac{(k^2 n-kn+k)!}{(k!)^{kn+1}} C_k(n),
\end{align*}
	where  $C_k(n)$ is the number of all (unlabelled) complete $k$-ary trees, that is equal to the Fuss-Catalan number \cite{Aval2008}
\begin{align*}
	C_k(n)=\displaystyle\frac{(kn)!}{n!(kn-n+1)!}.
\end{align*}
Consequently,
\begin{align}
f(n)
	&= \displaystyle\frac{(k^2 n-kn+k)!}{(k!)^{kn-n+1}(kn-n+1)!}\cdot\frac{(kn)!}{(k!)^nn!} \label{eq:fn1} \\
		&= \displaystyle\prod_{i=0}^{kn-n}\binom{ik+k-1}{k-1}\times \displaystyle \prod_{j=0}^{n-1}\binom{jk+k-1}{k-1}.\label{eq:fn2}
\end{align}

For proving the second part (b), there are three cases to be considered
depending on the value of $k$.

(b1) $k=p$ is a prime integer. We have
	\begin{equation*}
		\binom{ip+p-1}{p-1}
 =\frac{(ip+1)(ip+2)\cdots(ip+p-1)}{1\times 2\times \cdots \times (p-1)}  \equiv 1\pmod{p}.
\end{equation*}
	Thus $f(n)\equiv 1 \pmod p$ by identity \eqref{eq:fn2}.

	\smallskip

(b2)
$k=p^t \ (t\ge 2)$ where $p$ is a prime integer.
	If $p\ge 3$,
by Lemma \ref{th:p2}, we have
\begin{align*}
	\binom{ip^t+p^t-1}{p^t-1} &=\prod_{s=0}^{p^{t-1}-1}\frac{(ip^t+sp+1)\cdots(ip^t+sp+p-1)}{(sp+1)\cdots(sp+p-1)}\cdot \prod_{s=1}^{p^{t-1}-1}\frac{ip^t+sp}{sp} \\
	& \equiv \left[\frac{(p-1)!}{(p-1)!}\right]^{p^{t-1}}\cdot \binom{ip^{t-1}+p^{t-1}-1}{p^{t-1}-1} \pmod{p^2} \\
	& \equiv \binom{ip^{t-1}+p^{t-1}-1}{p^{t-1}-1}  \pmod{p^2}\\
	& \equiv \cdots\\
	& \equiv \binom{ip+p-1}{p-1} \pmod{p^2}\\
& = \frac{(ip+1)(ip+2)\cdots(ip+p-1)}{1\times 2\times \cdots \times (p-1)} \\
& \equiv 1\pmod{p^2}.
\end{align*}

Thus $f(n)\equiv 1\pmod {p^2}$ for $k=p^t$ with $p\geq 3$ and $t\geq 2$.

Now suppose $p=2$ and $k=2^t$ ($t\geq 2$). We have
\begin{align*}
	\binom{i2^t+2^t-1}{2^t-1} &=\prod_{s=0}^{2^{t-1}-1}\frac{i\cdot 2^t+2s+1}{2s+1}\cdot \prod_{s=1}^{2^{t-1}-1}\frac{i\cdot 2^t+2s}{2s} \\
& = \prod_{s=0}^{2^{t-2}-1}\frac{(i\cdot 2^t+4s+1)(i\cdot 2^t +4s+3)}{(4s+1)(4s+3)}\cdot \prod_{s=1}^{2^{t-1}-1}\frac{i\cdot 2^{t-1}+s}{s} \\
	& \equiv \left(\frac{-1}{-1}\right)^{2^{t-2}}\cdot \binom{i\cdot 2^{t-1}+2^{t-1}-1}{2^{t-1}-1}  \pmod{4}\\
	& \equiv\binom{i\cdot 2^{t-1}+2^{t-1}-1}{2^{t-1}-1}\pmod{4}\\
	& \equiv \cdots \\
	& \equiv\binom{i\cdot 2+2-1}{2-1}\pmod{4}\\
& = 2i+1.
\end{align*}
Therefore, by identity \eqref{eq:fn2}, we can check that
\begin{align*}
	f(n)\equiv \prod_{i=0}^{(2^t-1)n}(2i+1)\times \prod_{j=0}^{n-1}(2j+1) \equiv 1 \pmod 4.
\end{align*}

(b3) Suppose that $k$ has more than one prime factors. We want to prove $f(n)\equiv 0\pmod k$.
Let $p$ be a prime factor of $k$,  and write $k=b p^m$ with $b\geq 2$ and $p\nmid b$.
Notice that 
$f(n)\mid f(n+1)$ by identity \eqref{eq:fn1}. Thus, it suffices to show that
\begin{equation}\label{eq:f1}
	f(1)=	\frac{(k^2)!}{(k!)^{k+1}} \equiv 0\pmod {p^m},
\end{equation}
which is equivalent to
\begin{equation}\label{eq:f1'}
	\alpha(b^2 p^{2m}) - (bp^m+1)\, \alpha(b p^m) \geq m.
\end{equation}
By Legendre's formula \eqref{eq:Legendre},
the left-hand side of \eqref{eq:f1'}  is equal to
\begin{align*}
	\Delta
	&= \frac 1{p-1} \Bigl( b^2 p^{2m} - \beta(b^2) -(b p^m+1) (bp^m -\beta(b)) \Bigr) \\
	&= \frac 1{p-1} \Bigl( \beta(b) - \beta(b^2) +b p^m \beta(b) -b p^m \Bigr).
\end{align*}
Since $\beta(b^2) \leq b \beta(b)$ and $\beta(b)\geq 2,\, b\geq 2$, we have
\begin{align}
	\Delta
	&\geq  \frac 1{p-1} \Bigl( (bp^m -b +1)\beta(b)  -b p^m \Bigr)\nonumber\\
	&\geq  \frac 1{p-1} \Bigl( b(p^m -2) +2  \Bigr)\nonumber\\
	&\geq  \frac 1{p-1} \Bigl( 2p^m -2 \Bigr)\nonumber\\
	&\geq m.
\end{align}
This completes the proof.
\end{proof}


\smallskip

{\bf Acknowledgments}. The first author would like to thank Zhi-Ying Wen for inviting me to Tsinghua University where the paper was finalized.


\end{document}